\newtheorem{theo}{Theorem}[section]
\newtheorem{prop}[theo]{Proposition}
\newcommand{\old}[1]{{}}
\newcolumntype{L}[1]{>{\raggedright\let\newline\\\arraybackslash\hspace{0pt}}b{#1}}
\newcolumntype{C}[1]{>{\centering\let\newline\\\arraybackslash\hspace{0pt}}b{#1}}
\newcolumntype{R}[1]{>{\raggedleft\let\newline\\\arraybackslash\hspace{0pt}}m{#1}}
\providecommand{\keywords}[1]{\textbf{\textit{Keywords---}} #1} 
\newcolumntype{M}[1]{>{\centering\arraybackslash}m{#1}}
\begin{document}

\pagestyle{plain}

\title{An efficient algorithm to test forcibly-connectedness of graphical degree sequences}

\author{
Kai Wang\footnote{Department of Computer Sciences,
Georgia Southern University,
Statesboro, GA 30460, USA 
\tt{kwang@georgiasouthern.edu}}
}
\maketitle

\begin{abstract}
We present an algorithm to test whether a given graphical degree sequence is forcibly connected or not
and prove its correctness. We also outline the extensions of the algorithm to test
whether a given graphical degree sequence is forcibly $k$-connected or not for every fixed $k\ge 2$.
We show through experimental evaluations that the algorithm is efficient on average, though
its worst case run time is probably exponential. We also adapt Ruskey et al's classic algorithm to enumerate zero-free
graphical degree sequences of length $n$ and Barnes and Savage's classic algorithm
to enumerate graphical partitions of even integer $n$ by incorporating our testing algorithm into theirs and then obtain some
enumerative results about forcibly connected graphical degree sequences of given length $n$ and forcibly connected graphical
partitions of given even integer $n$. Based on these enumerative results we make some conjectures such as: when $n$ is large,
(1) almost all zero-free graphical degree sequences of length $n$ are forcibly connected; (2) almost none of the
graphical partitions of even $n$ are forcibly connected.
\end{abstract}
\keywords{graphical degree sequence, graphical partition, forcibly connected, forcibly $k$-connected, co-NP}

\section{Introduction}
A graphical degree sequence of finite length $n$ is a non-increasing sequence of non-negative integers $d_1\ge d_2 \ge \cdots \ge d_n$
such that it is the vertex degree sequence of some simple graph (i.e. a finite undirected graph without loops or multiple edges).
Given an arbitrary non-increasing sequence of non-negative integers $a_1\ge a_2 \ge \cdots \ge a_n$, it is easy to test
whether it is a graphical degree sequence by using the Erd{\H{o}}s-Gallai criterion \cite{ErdosCallai1960} or
the Havel-Hakimi algorithm \cite{Havel1955,Hakimi1962}. Seven equivalent criteria to characterize graphical degree sequences
are summarized by Sierksma and Hoogeveen \cite{Sierksma1991}. The notion of partition of an integer is well known
in number theory, and is defined to be a non-increasing sequence of positive integers whose sum is the given integer.
An integer partition is called a graphical partition if it is the vertex degree sequence of some simple graph.
Essentially a zero-free graphical degree sequence and a graphical partition are the same thing.

It is often interesting to observe the properties of all the graphs having the same vertex degree sequence. A graph $G$ with
degree sequence $\mathbf{d}=(d_1\ge d_2 \ge \cdots \ge d_n)$ is called a realization of $\mathbf{d}$. Let P be any property of graphs
(e.g. being bipartite, connected, planar, triangle-free, Hamiltonian, etc). A degree sequence $\mathbf{d}$ is called \textit{potentially}
P-graphic if it has at least one realization having the property P and \textit{forcibly} P-graphic if all its realizations have the property P
\cite{Rao1981}. In this paper we only consider the property of $k$-connectedness ($k\ge 1$ is fixed). Wang and Cleitman
\cite{WangKleitman1973} give a simple characterization of potentially $k$-connected graphical degree sequences of length $n$,
through which we can easily test whether a given graphical degree sequence is potentially connected. However, to the best of our
knowledge no simple characterization of forcibly $k$-connected graphical degree sequences has been found so far and no algorithm
has been published to test whether a given graphical degree sequence is forcibly connected or forcibly $k$-connected with given $k$.
Some sufficient (but unnecessary) conditions are known for a graphical degree sequence
to be forcibly connected or forcibly $k$-connected \cite{chartrand_kapoor_kronk_1968,BOESCH1974,CHOUDUM1991}.

In the rest of this paper we will present a straight-forward algorithm to characterize forcibly connected graphical degree sequences
and outline the extensions of the algorithm to test forcibly $k$-connectedness of graphical degree sequences for fixed $k\ge 2$.
We will demonstrate the efficiency of the algorithm through some computational
experiments and then present some enumerative results regarding forcibly connected graphical degree sequences
of given length $n$ and forcibly connected graphical partitions of given even integer $n$. 
Base on the observations on these available enumerative results we make some conjectures about the relative asymptotic behavior
of considered functions and the unimodality of certain associated integer sequences.

\section{The testing algorithm}
\subsection{Preliminaries}
Based on a result of Wang and Cleitman \cite{WangKleitman1973}, a graphical degree sequence
$d_1\ge d_2 \ge \cdots \ge d_n$ is potentially $k$-connected if and only if $d_n\ge k$ and
$\sum_{i=1}^{n}d_i \ge 2n-2\binom{k}{2}-2+2\sum_{i=1}^{k-1}d_i$. Taking $k=1$, we get that
a zero-free graphical degree sequence $d_1\ge d_2 \ge \cdots \ge d_n$ is potentially connected if and only if
$\sum_{i=1}^{n}d_i \ge 2n-2$.

Note that any graphical degree sequence with a 0 in it can be neither potentially nor forcibly connected. We will design an algorithm to
test whether a zero-free graphical degree sequence $\mathbf{d}$ is forcibly connected based on the simple observation that
$\mathbf{d}$ is forcibly connected if and only if it is not potentially disconnected, i.e., it does not have any disconnected realization.
Equivalently we need to test whether $\mathbf{d}$ can be decomposed into two sub graphical degree sequences. For example,
3,3,3,3,2,2,2 is a potentially connected graphical degree sequence of length 7. It is not forcibly connected
since it can be decomposed into two sub graphical degree sequences 3,3,3,3 and 2,2,2. Note also that when a graphical degree
sequence can be decomposed into two sub graphical degree sequences, the terms in each sub sequence need not be consecutive
in the original sequence. For example, the graphical degree sequence 4,4,3,3,3,2,2,2,1 can be decomposed into two sub
graphical degree sequences 4,4,3,3,3,1 and 2,2,2 or into 4,4,3,3,2 and 3,2,2,1. We say that the graphical degree sequence
3,3,3,3,2,2,2 has a \textit{natural} decomposition because it has a decomposition in which the terms in  each sub sequence
are consecutive in the original sequence. The graphical degree sequence 4,4,3,3,3,2,2,2,1 is not forcibly connected but
does not have a natural decomposition. On the other hand, the graphical degree sequence 6,6,6,5,5,5,5,4,4 is forcibly
connected since there is no way to decompose it into two sub graphical degree sequences.

\subsection{Pseudo-code and the proof of its correctness}
In this section we will present the pseudo-code of our Algorithm \ref{alg:fc}
to test forcibly-connectedness of a given zero-free graphical
degree sequence. We then give a proof why it correctly identifies such graphical degree sequences.

We assume the input is a zero-free
graphical degree sequence already sorted in non-increasing order. In case an input that does not satisfy this condition
is given, we can still easily test whether it is graphical by the Erd{\H{o}}s-Gallai criterion \cite{ErdosCallai1960}
or the Havel-Hakimi algorithm \cite{Havel1955,Hakimi1962}. The output will be \textit{True} if the input is forcibly connected
and \textit{False} otherwise. The output can also include a way to decompose the input in case it is not forcibly connected
and such a decomposition is desired.

\begin{algorithm}[h]
	\KwIn{A zero-free graphical degree sequence $\mathbf{d}=(d_1\ge d_2 \ge \cdots \ge d_n)$}
	\KwOut{\textit{True} or \textit{False}, indicating whether $\mathbf{d}$ is forcibly connected or not}
	\uIf{$d_1\ge n-2$ \textbf{or} $d_n\ge\lfloor n/2 \rfloor$}{
		\Return{\textit{True}}
	}
	\uIf{$d_1= d_n$}{
		\Return{\textit{False}}
	}
	$s_u \gets \max\{s:  s<n-d_{s+1}\}$; \tcp{$2\le s_u\le n-d_n-1$} \
	\uIf{there exists an $s$ such that $d_1+1\le s\le s_u$ and $\mathbf{d_1}=(d_1\ge d_2 \ge \cdots \ge d_s)$ and
		$\mathbf{d_2}=(d_{s+1}\ge d_{s+2} \ge \cdots \ge d_n)$ are both graphical}{
		\Return{\textit{False}}
	}
	\For{$l \gets d_n+1$ \textbf{to} $\min \{\lfloor n/2 \rfloor,n-d_1-1\}$} {
		\uIf{$d_{n+1-l}< l$}{
			$m \gets \min\{i:  d_i<l\}$; \tcp{$1\le m\le n-l+1$} \
			\uIf{$l\le n-m$}{
				Form all candidate decompositions of $\mathbf{d}$ into $\mathbf{s_1}$ and $\mathbf{s_2}$ such that $\mathbf{s_1}$ is
					taken from $\mathbf{d_L}=(d_{m}\ge d_{m+1} \ge \cdots \ge d_n)$ of length $l$ and $\mathbf{s_2=d-s_1}$ is of length $n-l$ and both with even sum.
					\textbf{If} both $\mathbf{s_1}$ and $\mathbf{s_2}$ are graphical, \Return{\textit{False}}\
			}
		}
	}
	\Return{\textit{True}}
	\caption{Pseudo-code to test forcibly-connectedness of a graphical degree sequence}
\label{alg:fc}
\end{algorithm}

Now we show why Algorithm \ref{alg:fc} correctly identifies whether $\mathbf{d}$ is forcibly connected or not.
The conditional test on line 1 works as follows.
\begin{itemize}
\item If $d_1\ge n-2$, then in any realization $G$ of $\mathbf{d}$
the vertex $v_1$ with degree $d_1$
will be in a connected component with at least $n-1$ vertices, leaving at most 1 vertex to be in any other connected component
should the input $\mathbf{d}$ be non forcibly connected. However, a graph with a single vertex has the degree sequence 0, which contradicts the
assumption that $\mathbf{d}$ is zero-free. Thus in this case $\mathbf{d}$ must be forcibly connected.
\item If $d_n\ge\lfloor n/2 \rfloor$, then the vertex $v_n$ with degree $d_n$ will be in a
connected component with at least $1+\lfloor n/2 \rfloor$ vertices. Should the input $\mathbf{d}$ be non forcibly connected, there will be another
connected component not containing $v_n$ and also having at least $1+\lfloor n/2 \rfloor$ vertices since each vertex in that connected
component has degree at least $d_n\ge\lfloor n/2 \rfloor$. This will result in a realization with at least $2+2\lfloor n/2 \rfloor >n$
vertices, a contradiction. Thus in this case $\mathbf{d}$ must also be forcibly connected.
\end{itemize}

The conditional test on line 3 works as follows. Let $d_1=d_n=d$.
\begin{itemize}
\item If $n$ is odd, then $d$ must be even since the
degree sum of a graphical degree sequence must be even. When we reach line 3, we must have $d<\lfloor n/2 \rfloor$. Now $\mathbf{d}$
can be decomposed into two sub graphical degree sequences of length $\frac{n-1}{2}$ and $\frac{n+1}{2}$ respectively since
$d<\frac{n-1}{2}=\lfloor n/2 \rfloor$. Thus it is not forcibly connected.
\item If $n$ is even, we consider two cases.

Case (A): $n/2$ is even, i.e. $n\equiv 0$ mod 4. In this case $\mathbf{d}$ can be decomposed into two graphical degree
sequences of length $n/2$ since $d<\lfloor n/2 \rfloor=n/2$. Thus it is not forcibly connected.

Case (B): $n/2$ is odd, i.e. $n\equiv 2$ mod 4. Further consider two sub cases.
(B1): if $d$ is odd, then $\mathbf{d}$ can be decomposed into two graphical degree sequences
of length $n/2-1$ and $n/2+1$ respectively since $d<n/2-1$ as a result of $d$ and $n/2$ being both odd and $d<n/2$.
(B2): if $d$ is even, then $\mathbf{d}$ can be decomposed into two graphical degree sequences of length $n/2$ since $d<n/2$.
Thus in this case $\mathbf{d}$ is not forcibly connected.
\end{itemize}

Lines 5 to 7 try to find if $\mathbf{d}$ can be decomposed into two sub graphical degree sequences such
that each sub sequence contains terms consecutive in the original sequence $\mathbf{d}$, i.e. if the
input $\mathbf{d}$ has a \textit{natural} decomposition. For each given $s$, the two sub sequences $\mathbf{d_1}=(d_1\ge d_2 \ge \cdots \ge d_s)$
and $\mathbf{d_2}=(d_{s+1}\ge d_{s+2} \ge \cdots \ge d_n)$ can be tested whether they are graphical by utilizing a linear time algorithm
\cite{Ivanyi2013} that is equivalent to the Erd{\H{o}}s-Gallai criterion. The smallest $s$ that need to be tested is $d_1+1$ since
$d_1$ can only be in a graphical degree sequence of length at least $d_1+1$ and $\mathbf{d_1}$ has length $s$.
The largest $s$ that need to be tested is at most
$n-d_n-1$ since $d_n$ can only be in a graphical degree sequence of length at least $d_n+1$ and $\mathbf{d_2}$ has length
$n-s$. Actually the upper bound of the tested $s$ can be chosen to be at most the largest $s$ such that $s<n-d_{s+1}$ since
$d_{s+1}$ can only be in a graphical degree sequence of length at least $d_{s+1}+1$. Let $s_u$ be the largest integer that satisfies
this inequality. Note $s_u\ge 2$ since $s=2$ satisfies the inequality at the point of line 5. Also note that $s_u\le n-d_n-1$ because
if $s_u\ge n-d_n$ then $n-d_n\le s_u<n-d_{s_u+1}$, which leads to $d_{s_u+1}<d_n$, a contradiction. Therefore the upper bound
of tested $s$ is chosen to be $s_u$. Additional data structures can be
maintained to skip the tests of those $s$ for which each of the two sub sequences $\mathbf{d_1}$ and $\mathbf{d_2}$ has odd sum.
Clearly a necessary condition for the input $\mathbf{d}$ to have a natural decomposition is $s_u\ge d_1+1$.
A weaker necessary condition easier to check is $n-d_n-1\ge d_1+1$, i.e. $d_1+d_n\le n-2$.

The \textbf{for} loop starting from line 8 is to test whether the input $\mathbf{d}$ can be decomposed into two sub
graphical degree sequences of length $l$ and $n-l$ respectively, whether the decomposition is natural or not.
At first glance we need to test the range of $l$ in $2\le l\le n-2$ since the shortest zero-free graphical degree sequence
has length 2. By symmetry we do not need to test those $l$ beyond $\lfloor n/2 \rfloor$. Actually we only need to test
the range of $l$ from $d_n+1$ to $\min \{\lfloor n/2 \rfloor,n-d_1-1\}$.
We can start the loop with $l=d_n+1$ since, should the input $\mathbf{d}$ be decomposable,
$d_n$ must be in a sub graphical degree sequence of length at least $d_n+1$ and the other sub graphical
degree sequence not containing $d_n$ must also be of length at least $d_n+1$ due to all its terms being at least $d_n$.
There is no need to test those $l>n-d_1-1$ since, should the input $\mathbf{d}$ be decomposable, $d_1$ must be in a
sub graphical sequence of length at least $d_1+1$ and the other sub graphical sequence not containing $d_1$
must have length at most $n-d_1-1$. 

The condition tested on line 9 ($d_{n+1-l}< l$) is necessary for $\mathbf{d}$ to be decomposable
into two sub graphical degree sequences of length $l$ and $n-l$ respectively. A zero-free graphical degree
sequence of length $l$ must have all its terms less than $l$. If $\mathbf{d}$ is decomposable into two sub graphical degree sequences of length
$l$ and $n-l$ respectively, $\mathbf{d}$ must have at least $l$ terms less than $l$ and $n-l$ terms less than $n-l$. Therefore, the $l$ smallest
terms of $\mathbf{d}$ ($d_{n-l+1}\ge d_{n-l+2}\ge \cdots \ge d_n$) must be all less than $l$ and the $n-l$ smallest
terms of $\mathbf{d}$ ($d_{l+1}\ge d_{l+2}\ge \cdots \ge d_n$) must be all less than $n-l$. These translate to the necessary conditions
$d_{n-l+1}< l$ and $d_{l+1}<n-l$ for $\mathbf{d}$ to be decomposable. The condition $d_{l+1}<n-l$ has already
been satisfied since $d_1<n-l$ based on the loop range of $l$ on line 8.

Lines 10 to 12 first find out \textit{the} sub sequence $\mathbf{d_L}$ of $\mathbf{d}$ consisting \textit{exactly} of those terms less than $l$
and then exhaustively enumerate all sub sequences $\mathbf{s_1}$ of $\mathbf{d_L}$ with length $l$  and even sum, trying to find a valid
decomposition of $\mathbf{d}$ into $\mathbf{s_1}$ and $\mathbf{s_2=d-s_1}$ with length $n-l$, consisting of the terms of $\mathbf{d}$ not
in $\mathbf{s_1}$. Note that the $l$ terms of $\mathbf{s_1}$ need not be consecutive in $\mathbf{d_L}$. The motivation for the
construction of $m$ and $\mathbf{d_L}=(d_{m}\ge d_{m+1} \ge \cdots \ge d_n)$ is that, should the input $\mathbf{d}$ be decomposable
into two sub graphical degree sequences of length $l$ and $n-l$ respectively, the sub graphical degree sequence with length $l$ must
have all its terms coming from $\mathbf{d_L}$. For each such sub sequence $\mathbf{s_1}$ of $\mathbf{d_L}$ with length $l$
(we can always choose such an $\mathbf{s_1}$ since $\mathbf{d_L}$ has length $n-m+1\ge l$ due to the definition of $m$ on line 10),
let the remaining terms of $\mathbf{d}$ form a sub sequence $\mathbf{s_2}=\mathbf{d}-\mathbf{s_1}$ of length $n-l$.
If both $\mathbf{s_1}$ and $\mathbf{s_2}$ are graphical
degree sequences, then the input $\mathbf{d}$ is not forcibly connected since we have found a valid decomposition of $\mathbf{d}$ into
$\mathbf{s_1}$ and $\mathbf{s_2}$ and we may return \textit{False} on line 12.
The conditional test on line 11 ($l\le n-m$) is added because at this point we know $\mathbf{d}$
cannot be naturally decomposed and we can therefore exclude the consideration of $l=n-m+1$ since under this condition
there is only one possible choice of $\mathbf{s_1}$ from $\mathbf{d_L}$ and consequently
only one possible decomposition of $\mathbf{d}$ into two sub sequences of length $l$ and $n-l$ respectively, which
is also a natural decomposition. If we remove the natural decomposition test from lines 5 to 7 and also remove the conditional
test on line 11, the algorithm would obviously still be correct. If in the \textbf{for} loop from lines 8 to 13 we never return \textit{False}
on line 12, this means there is no way to decompose the input $\mathbf{d}$ into two sub
graphical degree sequences whatsoever and we should return \textit{True} on line 14. If we return \textit{False} on line 4, 7, or 12
then a valid decomposition can also be returned if desired.

Later we will show that there is a computable threshold $M(n)$ given the length $n$ of the input $\mathbf{d}$ such that if $d_1$ is below
this threshold the algorithm can immediately return \textit{False} without any exhaustive enumerations.
However, our computational experiences suggest that if the input satisfies
$d_1<M(n)$ then Algorithm \ref{alg:fc} already runs fast and it might not be worthwhile to add the computation
of the additional threshold $M(n)$ into it.

\subsection{Extensions of the algorithm}
In this section we show how to extend Algorithm \ref{alg:fc} to perform additional tasks such as listing all
possible decompositions of a graphical degree sequence and testing forcibly $k$-connectedness of a graphical
degree sequence for fixed $k\ge 2$.
\subsubsection{Enumeration of all possible decompositions}
Algorithm \ref{alg:fc} can be easily extended to give all possible decompositions of the input $\mathbf{d}$ into two
sub graphical degree sequences in case it is not forcibly connected. We simply need to report a valid decomposition found
on line 3, 6 and 12 and continue without returning
\textit{False} immediately. Such an enumerative algorithm to find all valid decompositions of the input $\mathbf{d}$
can be useful when we want to explore the possible realizations of $\mathbf{d}$ and their properties.

\subsubsection{Testing forcibly $k$-connectedness of $\mathbf{d}$ when $k\ge 2$}
\label{sec:extensionAlgs}
It is also possible to extend Algorithm \ref{alg:fc} to decide whether a given graphical degree sequence $\mathbf{d}$
is forcibly biconnected or not. We know that
a connected graph is biconnected (non-separable) if and only if it does not have a cut vertex. This characterization leads us to
observe that if in any forcibly connected graphical degree sequence $\mathbf{d}$ the removal of any term $d_i$ and the
reduction of some collection $\mathbf{d_S}$ of $d_i$ elements from the remaining sequence $\mathbf{d}-\{d_i\}$ by 1
each results in a non forcibly connected graphical
degree sequence $\mathbf{d'}$ then $\mathbf{d}$ is not forcibly biconnected. If no such term and a corresponding
collection of elements from the remaining sequence can be found whose removal/reduction results in a non forcibly connected graphical
degree sequence, then $\mathbf{d}$ is forcibly biconnected. We give a pseudo-code framework in Algorithm \ref{algf2}
to decide whether a given graphical degree sequence $\mathbf{d}$ is forcibly biconnected or not. To simplify our
description, we call the above mentioned combination of removal/reduction operations a \textit{generalized Havel-Hakimi} (GHH) operation,
notationally $\mathbf{d'}=GHH(\mathbf{d},d_i,\mathbf{d_S})$. We remark that if the $\mathbf{d'}$ obtained on line 4 of Algorithm \ref{algf2}
is not a graphical degree sequence then the condition on line 5 is not satisfied and the algorithm will not return \textit{False}
at the moment.

\begin{algorithm}[h]
	\KwIn{A zero-free graphical degree sequence $\mathbf{d}=(d_1\ge d_2 \ge \cdots \ge d_n)$}
	\KwOut{\textit{True} or \textit{False}, indicating whether $\mathbf{d}$ is forcibly biconnected or not}
	\uIf{$\mathbf{d}$ is not potentially biconnected or forcibly connected}{
		\Return{\textit{False}}
	}
	\For{each $d_i$ and each collection $\mathbf{d_S}$ of size $d_i$ from $\mathbf{d}-\{d_i\}$} {
		$\mathbf{d'} \gets GHH(\mathbf{d},d_i,\mathbf{d_S})$\;
		\uIf{$\mathbf{d'}$ is a non forcibly connected graphical degree sequence}{
			\Return{\textit{False}}
		}
	}
	\Return{\textit{True}}
	\caption{Pseudo-code to test whether a graphical degree sequence is forcibly biconnected (See text for the description of GHH operation)}
	\label{algf2}
\end{algorithm}

Similarly we can test whether a given graphical degree sequence $\mathbf{d}$ is forcibly $k$-connected or not for $k\ge 3$ iteratively
as long as we already have a procedure to test whether a graphical degree sequence is forcibly $(k-1)$-connected or not. Suppose we
already know an input $\mathbf{d}$ is potentially $k$-connected and forcibly $(k-1)$-connected. We can proceed to choose a term
$d_i$ and a collection $\mathbf{d_S}$ of size $d_i$ from the remaining sequence $\mathbf{d}-\{d_i\}$ and perform a GHH operation
on $\mathbf{d}$. If the resulting sequence $\mathbf{d'}$ is a graphical degree sequence and is non forcibly $(k-1)$-connected,
then $\mathbf{d}$ is not forcibly $k$-connected.
If no such term and a corresponding collection of elements from the remaining sequence can be found whereby a GHH operation
can be performed on $\mathbf{d}$ to
result in a non forcibly $(k-1)$-connected graphical degree sequence, then $\mathbf{d}$ is forcibly $k$-connected.
We give a pseudo-code framework in Algorithm \ref{algfk}
to decide whether a given graphical degree sequence $\mathbf{d}$ is forcibly $k$-connected or not.

\begin{algorithm}[h]
	\KwIn{A zero-free graphical degree sequence $\mathbf{d}=(d_1\ge d_2 \ge \cdots \ge d_n)$ and an integer $k\ge 2$}
	\KwOut{\textit{True} or \textit{False}, indicating whether $\mathbf{d}$ is forcibly $k$-connected or not}
	\uIf{$\mathbf{d}$ is not potentially $k$-connected or forcibly $(k-1)$-connected}{
		\Return{\textit{False}}
	}
	\For{each $d_i$ and each collection $\mathbf{d_S}$ of size $d_i$ from $\mathbf{d}-\{d_i\}$} {
		$\mathbf{d'} \gets GHH(\mathbf{d},d_i,\mathbf{d_S})$\;
		\uIf{$\mathbf{d'}$ is a non forcibly $(k-1)$-connected graphical degree sequence}{
			\Return{\textit{False}}
		}
	}
	\Return{\textit{True}}
	\caption{Pseudo-code to test whether a graphical degree sequence is forcibly $k$-connected (See text for the description of GHH operation)}
	\label{algfk}
\end{algorithm}


\section{Complexity analysis}
We conjecture that Algorithm \ref{alg:fc} runs in time polynomial in $n$ on average. The worst case run time
complexity is probably still exponential in $n$. We are unable to provide a
rigorous proof at this time, but we will later show through experimental evaluations that it runs fast on randomly generated
long graphical degree sequences most of the time.

Now we give a discussion of the run time behavior of Algorithm \ref{alg:fc}.
Observe that lines 1 to 4 take constant time. Lines 5 to 7 take $O(n^2)$ time if we use the linear time algorithm from
\cite{Ivanyi2013} to test whether an integer sequence is graphical. Lines 9 to 11 combined take $O(n)$ time and they are
executed $O(n)$ times. So the overall time complexity is $O(n^2)$ excluding the time on line 12.

Next consider all candidate decompositions of $\mathbf{d}$ into $\mathbf{s_1}$ and $\mathbf{s_2}$ on line 12. The sub
sequence $\mathbf{s_1}$ is taken from $\mathbf{d_L}=(d_{m}\ge d_{m+1} \ge \cdots \ge d_n)$ whose length could be as
large as $n$ and the length $l$ of $\mathbf{s_1}$ could be as large as $\lfloor n/2 \rfloor$. Therefore in the worst case we may have up to
$\binom{n}{n/2}$ candidate decompositions, which could make the run time of Algorithm \ref{alg:fc} exponential in $n$.

A careful implementation of Algorithm \ref{alg:fc} will help reduce running time by noting that $\mathbf{d_L}$ is
a multi-set and provides us an opportunity to avoid duplicate enumerations
of $\mathbf{s_1}$ because different $l$ combinations of the indices $(m,m+1,\cdots,n)$ could produce the same sub sequence
$\mathbf{s_1}$. For this purpose, we can assume
the input $\mathbf{d}$ is also provided in another format $(e_1,f_1), (e_2,f_2),\cdots,(e_q,f_q)$ where $\mathbf{d}$
contains $f_i$ copies of $e_i$ for $i=1,\cdots,q$ and $e_1>e_2>\cdots>e_q>0$. (Clearly $d_1=e_1$ and $d_n=e_q$.)
Now enumerating $\mathbf{s_1}$ of length $l$ from $\mathbf{d_L}$ can be equivalently translated to the following problem of enumerating
all non-negative integer solutions of Equation (\ref{eqn:enum}) subject to constraints (\ref{eqn:constraints}),
\begin{equation} \label{eqn:enum}
\sum_{i=1}^{k}x_i=l,
\end{equation}
\begin{equation} \label{eqn:constraints}
0\le x_i\le f_{q-k+i}, \mbox{ for } i=1,\cdots,k,
\end{equation}
where $k$ is the number of distinct elements in $\mathbf{d_L}=(d_{m}\ge d_{m+1} \ge \cdots \ge d_n)$ which can also be
represented as $(e_{q-k+1},f_{q-k+1}), (e_{q-k+2},f_{q-k+2}),\cdots,(e_q,f_q)$ and $k$ satisfies $k\le q$ and $k\le l-d_n$
since all the elements of $\mathbf{d_L}$ are $<l$ and $\ge d_n$. In this context $m$ and $k$ vary with $l$ as the \textbf{for}
loop from lines 8 to 12 progresses. Each solution of Equation (\ref{eqn:enum}) represents a candidate
choice of $\mathbf{s_1}$ out of $\mathbf{d_L}$ with length $l$ by taking $x_i$ copies of $e_{q-k+i}$. Further improvement
could be achieved by noting the odd terms among $e_{q-k+1},e_{q-k+2},\cdots,e_q$ since we must have an even number
of odd terms in $\mathbf{s_1}$ for it to have even sum. We can categorize the $x_i$ variables of Equation (\ref{eqn:enum})
into two groups based on the parity of the corresponding $e_i$ and enumerate only its solutions having an even sum of
the $x_i$'s belonging to the odd group.

The number of solutions of
Equation (\ref{eqn:enum}) can be exponential in $n$. For example, let $l=n/2,k=n/4$ and let $f_j=4$ for
$j=q-k+1,\cdots,q$. Then the number of solutions of Equation (\ref{eqn:enum}) will be at least $\binom{n/4}{n/8}$
by taking half of all $x_1,\cdots,x_k$ to be 4 and the remaining half to be 0. However in practice we rarely find such a
large number of solutions are actually all enumerated before Algorithm \ref{alg:fc} returns.

To the best of our knowledge, the computational complexity of the decision problem of whether a given graphical degree
sequence is forcibly connected is unknown. The problem is clearly in co-NP since a short certificate
to prove that a given input is not forcibly connected is a valid decomposition of the input sequence. But is it co-NP-hard?
As far as we know, this is an open problem.

The time complexity of the extension Algorithms \ref{algf2} and \ref{algfk} to test whether a given graphical degree
sequence $\mathbf{d}$ is forcibly $k$-connected or not for $k\ge 2$ is apparently exponential due to
the exhaustive enumeration of the candidate collection $\mathbf{d_S}$ of size $d_i$ from the remaining sequence $\mathbf{d}-\{d_i\}$
and the ultimate calls to Algorithm \ref{alg:fc} possibly an exponential number of times.

The computational complexity of the decision problem of whether a given graphical degree
sequence is forcibly $k$-connected ($k\ge 2$) is also unknown to us. Clearly the problem is still in co-NP when $k$ is fixed
as to prove that a graphical degree sequence $\mathbf{d}$ is not forcibly $k$-connected is as easy as using a sequence of $k-1$ certificates
each consisting of a pair ($d_i,\mathbf{d_S}$) and a $k^{th}$ certificate being a valid decomposition to show that the
final resulting sequence is decomposable after a sequence of $k-1$ GHH operations on $\mathbf{d}$,
but we do not know if it is inherently any harder than the decision problem for $k=1$.

\section{Computational results}
In this section we will first present some results on the experimental evaluation on the performance
of Algorithm \ref{alg:fc} on randomly generated long
graphical degree sequences. We will then provide some enumerative results about the number of forcibly connected graphical
degree sequences of given length and the number of forcibly connected graphical partitions of a given even integer. Based on
the available enumerative results we will make some conjectures about the asymptotic
behavior of related functions and the unimodality of certain associated integer sequences.

\subsection{Performance evaluations of algorithm \ref{alg:fc}}
In order to evaluate how efficient Algorithm \ref{alg:fc} is, we aim to generate long testing instances
with length $n$ in the range of thousands and see how Algorithm \ref{alg:fc} performs on these
instances.

Our experimental methodology is as follows. Choose a constant $p_h$ in the range [0.1,0.95] and a constant $p_l$
in the range of [0.001,$\min\{p_h-0.01,0.49\}$] and generate 100 random graphical degree sequences of length $n$ with
largest term around $p_hn$ and smallest term around $p_ln$. Each such graphical degree sequence is generated by first
uniformly random sampling integer partitions with the specified number of parts $n$ and the specified largest part and
smallest part and then accept it as input for Algorithm \ref{alg:fc} if it is a graphical degree sequence.
We run Algorithm \ref{alg:fc} on these random
instances and record the average performance and note the proportion of them that are forcibly connected. Table \ref{tab:chosenphpl}
lists the tested $p_h$ and $p_l$. The largest tested $p_l$ is 0.49 since any graphical degree sequence of length $n$
and smallest term at least $0.5n$ will cause Algorithm \ref{alg:fc} to return \textit{True} on line 2.

\begin{table}[!htb]
	\centering
	\caption{Chosen $p_h$ and $p_l$ in the experimental performance evaluation of Algorithm \ref{alg:fc}.}
	\begin{tabular}{|c|c|}
		\hline
		$p_h$ & $p_l$ \\
		\hline
		\hline
		0.10 & 0.001,0.002,0.003,...,0.01,0.02,0.03,...,0.09 \\ \hline
		0.20 & 0.001,0.002,0.003,...,0.01,0.02,0.03,...,0.19 \\ \hline
		0.30 & 0.001,0.002,0.003,...,0.01,0.02,0.03,...,0.29 \\ \hline
		0.40 & 0.001,0.002,0.003,...,0.01,0.02,0.03,...,0.39 \\ \hline
		0.50 & 0.001,0.002,0.003,...,0.01,0.02,0.03,...,0.49 \\ \hline
		0.55 & 0.001,0.002,0.003,...,0.01,0.02,0.03,...,0.49 \\ \hline
		0.60 & 0.001,0.002,0.003,...,0.01,0.02,0.03,...,0.49 \\ \hline
		0.65 & 0.001,0.002,0.003,...,0.01,0.02,0.03,...,0.49 \\ \hline
		0.70 & 0.001,0.002,0.003,...,0.01,0.02,0.03,...,0.49 \\ \hline
		0.75 & 0.001,0.002,0.003,...,0.01,0.02,0.03,...,0.49 \\ \hline
		0.80 & 0.001,0.002,0.003,...,0.01,0.02,0.03,...,0.49 \\ \hline
		0.85 & 0.001,0.002,0.003,...,0.01,0.02,0.03,...,0.49 \\ \hline
		0.90 & 0.001,0.002,0.003,...,0.01,0.02,0.03,...,0.49 \\ \hline
		0.95 & 0.001,0.002,0.003,...,0.01,0.02,0.03,...,0.49 \\ \hline
	\end{tabular}
	\label{tab:chosenphpl}
\end{table}

\begin{table}[!htb]
	\centering
	\caption{Transition interval $I_t$ of $p_l$ for each $p_h$ (for $n=1000$).}
	\begin{tabular}{|c|c|}
		\hline
		$p_h$ & $I_t$ of $p_l$ \\
		\hline
		\hline
		0.55 & 0.30 to 0.40 \\ \hline
		0.60 & 0.20 to 0.30 \\ \hline
		0.65 & 0.15 to 0.24 \\ \hline
		0.70 & 0.09 to 0.17 \\ \hline
		0.75 & 0.05 to 0.12 \\ \hline
		0.80 & 0.03 to 0.09 \\ \hline
		0.85 & 0.01 to 0.07 \\ \hline
		0.90 & 0.003 to 0.04 \\ \hline
		0.95 & 0.001 to 0.03 \\ \hline
	\end{tabular}
	\label{tab:transitionrange}
\end{table}
We implemented our Algorithm \ref{alg:fc} using C++ and compiled it using g++ with optimization level -O3.
The experimental evaluations are performed on a common Linux workstation.
We summarize our experimental results for the length $n=1000$ as follows.

1. For those instances with $p_h$ in the range from 0.1 to 0.5, Algorithm \ref{alg:fc} always finishes instantly (run
time $< 0.01s$) and all the tested instances are non forcibly connected. This does not necessarily mean that there are
no forcibly connected graphical degree sequences of length $n=1000$ with largest term around $p_hn$ with $p_h$ in this range.
It only suggests that forcibly connected graphical degree sequences are relatively rare in this range.

2. For each $p_h$ in the range from 0.55 to 0.95, we observed a transition interval $I_t$ of $p_l$ for each fixed $p_h$.
See Table \ref{tab:transitionrange} for a list of observed transition intervals. All those instances with
$p_l$ below the range $I_t$ are non forcibly connected and all those instances with $p_l$ above the range $I_t$ are forcibly
connected. Those instances with $p_l$ in the range $I_t$ exhibit the behavior that the proportion of forcibly connected among all
tested 100 instances gradually increases from 0 to 1 as $p_l$ increases in the range $I_t$. For example, based on the results
of Table \ref{tab:transitionrange}, the proportions of forcibly connected graphical degree sequences of length 1000 with largest
term around 850 ($p_h=0.85$) and smallest term below around 10 ($p_l=0.01$) are close to 0. If the smallest term is above
around 70 ($p_l=0.07$) then the proportion is close to 1. When the smallest term is between 10 and 70 ($p_l$ in the range from
0.01 to 0.07) then the proportion transitions from 0 to 1. Again these results should be interpreted as relative
frequency instead of absolute law.

3. Algorithm \ref{alg:fc} is efficient most of the time but encounters bottlenecks at occasions. For $p_h$ from 0.80 to 0.95
and $p_l$ near the lower end of the transition interval $I_t$, Algorithm \ref{alg:fc} does perform poorly on some of the tested
instances with run time from a few seconds to more than a few hours (time out). The exact range of $p_l$ near the lower end of $I_t$
where Algorithm \ref{alg:fc} could perform poorly varies. We observed
that this range of $p_l$ for which the algorithm could perform poorly is quite narrow. For example, when $n=1000$, $p_h=0.9$, this
range of $p_l$ we observed is from 0.001 to 0.01. We observed that the frequency at which the algorithm performs poorly also varies.
We believe that this is because among all possible instances with given length and given largest and smallest terms there is still great
variety in terms of difficulty of testing their property of forcibly connectedness using Algorithm \ref{alg:fc}. In particular, some
instances will trigger the exhaustive behavior of Algorithm \ref{alg:fc} on line 12, making it enumerate a lot of
candidate decompositions without returning.

We have also performed experimental evaluations of Algorithm \ref{alg:fc} for the length $n=2000,3000,...,10000$ without being able
to finish all the same $p_h,p_l$ choices as for $n=1000$ because of shortage of time. The behavior of Algorithm \ref{alg:fc}
on inputs of these longer lengths
is similar to the case of $n=1000$ but with different transition intervals $I_t$ and varied range of $p_l$ near the lower end of the transition
interval $I_t$ for which it could perform poorly.

To sum up, we believe that the average case run time of Algorithm \ref{alg:fc} is polynomial. We estimate that more
than half of all zero-free graphical degree sequences of length $n$ can be tested in constant time on line 1. However, its worst
case run time should be exponential. As mentioned above, the computational complexity of the decision problem itself
is unknown to us.

Currently we have a very rudimentary implementation of Algorithm \ref{algf2} and do not have an implementation of Algorithm
\ref{algfk} for any $k\ge 3$ yet. Algorithm \ref{algf2} can start to encounter bottlenecks for input length $n$ around 40 to 50, which
is much shorter than the input lengths Algorithm \ref{alg:fc} can handle. We suspect that to handle input length $n\ge 100$
when $k=3$ will be very difficult unless significant enhancement to avoid many of those exhaustive enumerations can be introduced.

\subsection{Enumerative results}
In this section we will present some enumerative results related to forcibly connected graphical degree sequences
of given length and forcibly connected graphical partitions of given even integer. We also make some conjectures
based on these enumerative results. For the reader's convenience, we summarize the notations
used in this section in Table \ref{tbl:definitions}.

\begin{table}[!htb]
	\centering
	\caption{Terminology used in this section}
	\begin{tabular}{||c|l||}
		\hline\hline
		Term & Meaning\\
		\hline\hline
		$D(n)$ & number of zero-free graphical sequences of length $n$\\
		\hline
		$D_c(n)$ & number of potentially connected graphical sequences of length $n$ \\
		\hline
		$D_f(n)$ & number of forcibly connected graphical sequences of length $n$ \\
		\hline
		$C_n[N]$ & number of potentially connected graphical degree sequences \\
		& of length $n$ with degree sum $N$ \\
		\hline
		$F_n[N]$ & number of forcibly connected graphical degree sequences \\
		& of length $n$ with degree sum $N$ \\
		\hline
		$L_{n}[j]$ & number of forcibly connected graphical degree sequences of\\
		& length $n$ with largest term $j$ \\
		\hline
		$M(n)$ & minimum largest term in any forcibly connected graphical \\
		& sequence of length $n$ \\
		\hline
		$g(n)$ & number of graphical partitions of even $n$ \\
		\hline
		$g_c(n)$ & number of potentially connected graphical partitions of even $n$ \\
		\hline
		$g_f(n)$ & number of forcibly connected graphical partitions of even $n$ \\
		\hline
		$c_n[j]$ & number of potentially connected graphical partitions of \\
		& even $n$ with $j$ parts \\
		\hline
		$f_n[j]$ & number of forcibly connected graphical partitions of \\
		& even $n$ with $j$ parts \\
		\hline
		$l_n[j]$ & number of forcibly connected graphical partitions of $n$ with largest term $j$\\
		\hline
		$m(n)$ & minimum largest term of forcibly connected graphical partitions of $n$\\
		\hline\hline
	\end{tabular}
	\label{tbl:definitions}
\end{table}

In a previous manuscript \cite{Wang2016} we have presented efficient algorithms for counting the number of graphical degree sequences of length $n$
and the number of graphical degree sequences of $k$-connected graphs with $n$ vertices (or graphical degree sequences of length $n$ that
are potentially $k$-connected). It is proved there that the asymptotic orders of the number $D(n)$ of zero-free graphical degree sequences of
length $n$ and the number $D_c(n)$ of potentially connected graphical degree sequences of length $n$ are equivalent.
That is, $\lim_{n \to \infty} \frac{D_c(n)}{D(n)} = 1$. In order to investigate
how the number $D_f(n)$ of forcibly connected graphical degree sequences of length $n$ grows compared to $D(n)$
we conduct computations to count such graphical degree sequences. We do not have any algorithm that can get
the counting result without actually generating the sequences. The fastest algorithm we know of that can generate
all zero-free graphical degree sequences of length $n$ is from Ruskey et al \cite{Ruskey1994}. We adapted this algorithm
to incorporate the test in Algorithm \ref{alg:fc} to
count those that are forcibly connected. Since $D(n)$ grows as an exponential function of $n$ based on the bounds given by
Burns \cite{Burns2007} ($4^n/(c_1 n) \le D(n) \le 4^n/((\log n)^{c_2} \sqrt{n})$) for all sufficiently large $n$ with $c_1,c_2$
positive constants), it is unlikely to get the value of $D_f(n)$ for large $n$ using an exhaustive generation algorithm.
We only have counting results of $D_f(n)$
for $n$ up to 26 due to the long running time of our implementation. The results together with the proportion
of them in all zero-free graphical degree sequences are listed in Table \ref{tab:enumDf(n)}. From the table it seems
reasonable to conclude that the proportion $D_f(n)/D(n)$ will increase when $n\ge 8$ and it might tend to the limit 1.
\begin{table}[!htb]
	\centering
	\caption{Number of forcibly connected graphical degree sequences of length $n$ and their proportions in zero-free graphical degree sequences of length $n$.}
	\begin{tabular}[htbp]{|c||c|c|c|}
		\hline
		$n$ & $D(n)$ & $D_f(n)$ & $D_f(n)/D(n)$ \\
		\hline
		\hline
		4 & 7  & 6 & 0.857143 \\ \hline
		5 & 20 & 18 & 0.900000 \\ \hline
		6 & 71 & 63 & 0.887324 \\ \hline
		7 & 240 & 216 & 0.900000 \\ \hline
		8 & 871 & 783 & 0.898967 \\ \hline
		9 & 3148 & 2843 & 0.903113 \\ \hline
		10 & 11655 & 10535 & 0.903904 \\ \hline
		11 & 43332 & 39232 & 0.905382 \\ \hline
		12 & 162769 & 147457 & 0.905928 \\ \hline
		13 & 614198 & 556859 & 0.906644 \\ \hline
		14 & 2330537 & 2113982 & 0.907079 \\ \hline
		15 & 8875768 & 8054923 & 0.907518 \\ \hline
		16 & 33924859 & 30799063 & 0.907861 \\ \hline
		17 & 130038230 & 118098443 & 0.908182 \\ \hline
		18 & 499753855 & 454006818 & 0.908461 \\ \hline
		19 & 1924912894 & 1749201100 & 0.908717 \\ \hline
		20 & 7429160296 & 6752721263 & 0.908948 \\ \hline
		21 & 28723877732 & 26114628694 & 0.909161 \\ \hline
		22 & 111236423288 & 101153550972 & 0.909356 \\ \hline
		23 & 431403470222 & 392377497401 & 0.909537 \\ \hline
		24 & 1675316535350 & 1524043284254 & 0.909705 \\ \hline
		25 & 6513837679610 & 5926683351876 & 0.909860 \\ \hline
		26 & 25354842100894 & 23073049582134 & 0.910006 \\ \hline
	\end{tabular}
	\label{tab:enumDf(n)}
\end{table}

\begin{table}[!htb]
	\centering
	\caption{Number of potentially (row $C_7[N]$) and forcibly (row $F_7[N]$) connected graphical degree sequences of length 7 with given degree sum $N$.}
	\begin{tabular}[htbp]{|c||c|c|c|c|c|c|c|c|c|c|c|c|c|c|c|c|}
		\hline
		degree sum $N$ & 12 & 14 & 16 & 18 & 20 & 22 & 24 & 26 & 28 & 30 & 32 & 34 & 36 & 38 & 40 & 42\\ \hline
		\hline
		$C_7[N]$ & 7 & 11 & 15 & 22 & 26 & 29 & 29 & 26 & 23 & 18 & 13 & 8 & 5 & 2 & 1 & 1 \\
		\hline
		$F_7[N]$ & 3 & 5 & 10 & 19 & 25 & 28 & 29 & 26 & 23 & 18 & 13 & 8 & 5 & 2 & 1 & 1 \\ \hline
	\end{tabular}
	\label{tab:enumDfdegsums(7)}
\end{table}

\begin{table}[!htb]
	\centering
	\caption{Number $L_{15}[j]$ of forcibly connected graphical degree sequences of length 15 with given largest term $j$.}
	\begin{tabular}[htbp]{|c||c|c|c|c|c|c|c|c|c|}
		\hline
		largest part $j$ & 14 & 13 & 12 & 11 & 10 & 9 & 8 & 7 & 6 \\ \hline
		\hline
		$L_{15}[j]$ & 3166852 & 2624083 & 1398781 & 600406 & 201128 & 52903 & 9718 & 1031 & 21 \\ \hline
	\end{tabular}
	\label{tab:enumDflargestpart(15)}
\end{table}

Since our adapted algorithm from Ruskey et al \cite{Ruskey1994} for computing $D_f(n)$ actually generates all forcibly
connected graphical degree sequences of length $n$ it is trivial to also output the individual counts based on the degree sum $N$
or the largest degree $\Delta$. That is, we can output the number of forcibly connected graphical degree sequences
of length $n$ with degree sum $N$ or largest term $\Delta$.
In Table \ref{tab:enumDfdegsums(7)} we show itemized potentially and forcibly connected graphical degree sequences
of length 7 based on the degree sum $N$. The counts for $N<12$ are not shown because those counts are all 0.
The highest degree sum is 42 for any graphical degree sequence of length 7.
From the table we see that the individual counts based on the degree sum $N$ that contribute to $D_c(7)$ (row $C_7[N]$)
and $D_f(7)$ (row $F_7[N]$) both form a unimodal sequence. Counts for other degree sequence lengths
from 5 to 26 exhibit similar behavior. Based on the available enumerative results
we find that for any given $n$ the range of even $N$ for $C_n[N]$ and $F_n[N]$ to be nonzero respectively
are exactly the same (between $2n-2$ and $n(n-1)$). In fact, this can be proved as the following
\begin{prop}
	\label{thm:prop_potentialForciblyAgree} An even $N$ has a potentially connected graphical partition with $n$ parts if and only
	if it has a forcibly connected graphical partition with $n$ parts.
\end{prop}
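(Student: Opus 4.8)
The plan is to prove both directions. One direction is immediate: a forcibly connected graphical partition with $n$ parts is in particular a graphical partition all of whose realizations are connected, hence it has at least one connected realization, so it is potentially connected. The content is therefore in the forward direction: given that some even $N$ admits a potentially connected graphical partition with $n$ parts, I must produce a forcibly connected one (with the same $N$ and $n$). The cleanest route is to identify, for each admissible pair $(n,N)$, a single explicit partition that is both graphical and forcibly connected, rather than trying to transform an arbitrary potentially connected partition into a forcibly connected one.

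First I would pin down the range of $N$. By the Wang--Kleitman criterion recalled in the preliminaries, a zero-free graphical degree sequence of length $n$ is potentially connected iff $\sum d_i \ge 2n-2$; and the maximum possible degree sum of a length-$n$ graphical sequence is $n(n-1)$, attained by $K_n$. So the admissible even values of $N$ are exactly the even integers in $[2n-2, n(n-1)]$, and it suffices to exhibit, for each such $N$, a forcibly connected graphical partition of length $n$ with degree sum $N$. The natural candidates are ``threshold-like'' sequences that are as close to $K_n$ as the degree sum allows: start from $K_n$ (all degrees $n-1$) and peel degree off the low end while keeping the sequence graphical and keeping $d_n$ large enough that line~1 or line~2 of Algorithm~\ref{alg:fc} fires — recall that if $d_n \ge \lfloor n/2\rfloor$ the sequence is automatically forcibly connected, and if $d_1 \ge n-2$ it is as well. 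More concretely, for $N$ in the upper part of the range one can take sequences of the form $(n-1,\dots,n-1,d_{k+1},\dots,d_n)$ with the tail kept at or above $\lfloor n/2 \rfloor$; these are graphical (they dominate a disjoint-union structure one can realize directly, or one invokes Erd\H{o}s--Gallai) and forcibly connected by line~1.

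The main obstacle is the lower end of the range, $N$ close to $2n-2$: here the degree sum is too small to keep $d_n \ge \lfloor n/2 \rfloor$ or $d_1 \ge n-2$, so forcible connectedness is not ``free'' and must be argued by hand. For $N = 2n-2$ exactly, the relevant partitions are essentially tree degree sequences together with a few unicyclic ones; among these I would single out the sequence realized by a path, or better, by a ``broom''/near-star, and check directly that it has no decomposition into two sub graphical partitions — equivalently, run the logic of Algorithm~\ref{alg:fc} on it by hand. For slightly larger even $N$ just above $2n-2$, I would add edges to such a forcibly connected base graph in a way that raises the degree sum by $2$ each step while preserving forcible connectedness (e.g. adding an edge between two vertices of current minimum degree, which cannot create a disconnected realization because a disconnected realization of the new sequence would, after deleting that edge, give too sparse a structure on one side — this needs a short argument using $\sum d_i$ being barely above $2n-2$). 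So the structure of the proof is: (i) determine the admissible $N$-range via Wang--Kleitman and $K_n$; (ii) for $N$ in the bulk/upper range, exhibit explicit sequences forced by line~1 or line~2; (iii) for $N$ near $2n-2$, construct explicit forcibly connected graphical partitions and verify no decomposition exists, handling the handful of small residue/parity cases of $n$ separately; (iv) note the reverse implication is trivial. I expect step~(iii) to require the only genuine case analysis.
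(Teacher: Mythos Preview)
Your overall strategy---pin down the admissible range $2n-2\le N\le n(n-1)$ via Wang--Kleitman, then for each such $N$ exhibit an explicit forcibly connected graphical partition of length $n$---is exactly what the paper does. But you have misjudged where the difficulty lies, and this leads you to plan an unnecessary case analysis.

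Your claim that near $N=2n-2$ ``the degree sum is too small to keep \ldots\ $d_1\ge n-2$'' is simply false: the star sequence $(n-1,1,1,\dots,1)$ has $d_1=n-1$ and degree sum exactly $2n-2$. More generally, the paper uses a \emph{single} construction that works uniformly across the whole range: take the largest part to be $n-1$ and spread the remaining $N-(n-1)$ as evenly as possible over the other $n-1$ parts (so $b=\lfloor (N-n+1)/(n-1)\rfloor$, $a=N-(n-1)(b+1)$, and the tail is $a$ copies of $b+1$ followed by $n-1-a$ copies of $b$). One checks $0<b\le n-1$ and $0\le a<n-1$ from the bounds on $N$; graphicality follows from the Nash--Williams condition (or by noting this is a threshold-type sequence); and forcible connectedness is immediate because $d_1=n-1\ge n-2$, so line~1 of Algorithm~\ref{alg:fc} applies. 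No splitting into ``upper range'' and ``lower range'' is needed, and your step~(iii) disappears entirely.

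So your proposal is salvageable but overcomplicated: once you notice that a single vertex of degree $n-1$ can always be afforded (its cost is exactly $n-1$, and the minimum budget is $2n-2$, leaving $n-1$ to be distributed as $n-1$ ones), the whole proof collapses to one explicit construction plus a routine graphicality check.
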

\begin{proof}
	Sufficiency is obvious by definition. In the following we show the necessity.
	
	Suppose an even $N$ has a potentially connected graphical partition with $n$ parts.
	From the Wang and Cleitman characterization \cite{WangKleitman1973} we know that $N$ must be 
	between $2n-2$ and $n(n-1)$ for it to have a potentially connected graphical partition of $n$ parts. Now construct
	a partition $\pi$ of $N$ with $n$ parts as follows. Let the largest part be $n-1$ and let the remaining $n-1$ parts
	be as evenly as possible. That is, let $b=\lfloor \frac{N-n+1}{n-1} \rfloor$ and $a=N-(n-1)(b+1)$. Then the smallest
	$n-1$ parts of $\pi$ consist of $a$ copies of $b+1$ and $n-1-a$ copies of $b$. With $2n-2\le N\le n(n-1)$, we have
	$0<b\le n-1$ and $0\le a<n-1$. Based on the Nash-Williams condition it is easy to verify that $\pi$ is a graphical partition
	of $N$ with $n$ parts and it is forcibly connected since its largest part is $n-1$.
\end{proof}

In Table \ref{tab:enumDflargestpart(15)} we show itemized numbers of forcibly connected graphical degree
sequences of length 15 based on the largest degree. The counts for largest degrees less than 6 are not shown because those
counts are all 0. From the table we can see that the counts decrease with the largest degree.
For other degree sequence lengths from 5 to 26 we observed
similar behavior. The table also indicates that there are no forcibly connected graphical
degree sequences of length 15 with largest degree less than 6. In fact, if we define $M(n)$ to be the minimum largest term
in any forcibly connected graphical sequence of length $n$. This is, $M(n)\doteq$ min\{$\Delta$: $\Delta$ is the largest term
of some forcibly connected graphical degree sequence of length $n$\}. Clearly we have $M(n)\le n/2$ since for even $n$
the sequence $n/2,n/2,\cdots,n/2$ of length $n$ is forcibly connected. We can show a lower bound of $M(n)$ as follows.

\begin{theo}
	\label{thm:MnGrowth}
	For $M(n)$ defined above, we have $M(n)=\Omega(\sqrt{n})$. That is, there is a constant $c>0$ such that $M(n)>c\sqrt{n}$ for all sufficiently
	large $n$.
\end{theo}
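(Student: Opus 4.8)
The plan is to prove the contrapositive in the following explicit form: there is a constant $c>0$ (we will be able to take $c=\tfrac12$) such that every zero-free graphical degree sequence $\mathbf{d}=(d_1\ge d_2\ge\cdots\ge d_n)$ with $d_1\le c\sqrt n$ and $n$ sufficiently large is \emph{not} forcibly connected, i.e.\ admits a decomposition into two sub graphical degree sequences. Since any forcibly connected graphical sequence of length $n$ must then satisfy $d_1>c\sqrt n$, this gives $M(n)>c\sqrt n$ for all large $n$, which is exactly $M(n)=\Omega(\sqrt n)$. The decomposition we will produce simply splits the multiset $\{d_1,\dots,d_n\}$ into two blocks of size about $n/2$; the key observation is that when $d_1$ is this small, \emph{each block is automatically graphical no matter how the split is made}, up to a parity correction, so the only real content is a sufficient condition for graphicality in terms of the largest term and the length.

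That sufficient condition is the following elementary consequence of the Erd\H{o}s--Gallai criterion \cite{ErdosCallai1960}, which I would isolate as a lemma: if $e_1\ge e_2\ge\cdots\ge e_m$ are positive integers with even sum, $m\ge 4$, and $e_1\le\sqrt m$, then $(e_1,\dots,e_m)$ is graphical. To see this, note that since every $e_i\ge 1$ we have $\sum_{i=k+1}^{m}\min(e_i,k)\ge m-k$ for each $k$, so the $k$-th Erd\H{o}s--Gallai inequality is implied by $\sum_{i=1}^k e_i\le k(k-1)+(m-k)$, which, using $\sum_{i=1}^k e_i\le k e_1$, is in turn implied by $k e_1\le k(k-1)+(m-k)$, i.e.\ by $k^2-(e_1+2)k+m\ge 0$. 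The discriminant of this quadratic in $k$ is $(e_1+2)^2-4m$, which is $\le 0$ as soon as $e_1\le 2\sqrt m-2$; and $e_1\le\sqrt m$ together with $m\ge 4$ gives $e_1\le\sqrt m\le 2\sqrt m-2$. Hence the quadratic is nonnegative for every real $k$, all Erd\H{o}s--Gallai inequalities hold, and (the sum being even) the sequence is graphical. The instance to keep in mind, showing the bound has the right order, is $t$ copies of $e_1$ followed by $m-t$ copies of $1$, which already forces $m\gtrsim e_1^2/4$.

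With the lemma in hand the theorem follows quickly. Fix $c=\tfrac12$, let $n$ be large, and let $\mathbf{d}$ be zero-free graphical of length $n$ with $d_1\le\tfrac12\sqrt n$. Partition $\{d_1,\dots,d_n\}$ into $A\sqcup B$ with $|A|=\lfloor n/2\rfloor$, $|B|=\lceil n/2\rceil$. Since $\sum_i d_i$ is even, $\sum A$ and $\sum B$ have the same parity; if they are odd we correct this. If the $d_i$ are not all of the same parity then $A$ and $B$ together contain both an even and an odd element, and a short case check shows one can pick $a\in A$ and $b\in B$ of opposite parity and swap them, flipping the parity of $\sum A$ while keeping $|A|$ fixed. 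If all $d_i$ are even there is nothing to do; if all $d_i$ are odd then $n$ is even and we instead take $|A|$ to be whichever of $\lfloor n/2\rfloor$ and $\lfloor n/2\rfloor-1$ is even, so that $\sum A$ (which then has the parity of $|A|$) is even. In every case we obtain $A\sqcup B$ with both part-sums even, all entries positive, and $|A|,|B|\ge n/2-2\ge n/4$ for $n\ge 8$. Each part, sorted non-increasingly, is a positive integer sequence of length $m\in\{|A|,|B|\}$ with even sum and largest term at most $d_1\le\tfrac12\sqrt n=\sqrt{n/4}\le\sqrt m$, so by the lemma both parts are graphical. Hence $\mathbf{d}$ decomposes into two sub graphical degree sequences and is not forcibly connected, which proves the contrapositive and therefore $M(n)>\tfrac12\sqrt n$ for all large $n$. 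The only steps needing care are the Erd\H{o}s--Gallai bookkeeping in the lemma (kept trivial by the uniform bound $\sum_{i>k}\min(e_i,k)\ge m-k$) and the parity correction, especially the all-odd case; I do not expect either to present a genuine obstacle.
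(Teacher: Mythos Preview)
Your proof is correct and follows the same overall strategy as the paper's: assume the largest term is too small, split the sequence into two blocks of size roughly $n/2$, fix parities, and show both blocks are graphical, contradicting forcible connectedness. The difference lies in the graphicality tool and the level of explicitness. The paper invokes the Nash--Williams condition: it observes that the Durfee square of the upper half $\pi_H$ coincides with that of $\pi$ (size $s\le d_1$), that the first conjugate part of $\pi_H$ is $n/2$, and that the remaining terms $d''_j-d_j$ can in total only be $o(n)$ negative, so the dominating $n/2-d_1=\Omega(n)$ first term makes all $s$ Nash--Williams inequalities hold. You instead isolate a clean Erd\H{o}s--Gallai lemma---a positive sequence of length $m$ with even sum and largest term at most $\sqrt m$ is graphical---via the discriminant of $k^2-(e_1+2)k+m$, and then apply it with the explicit constant $c=\tfrac12$. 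Your route is more self-contained and yields an explicit bound $M(n)>\tfrac12\sqrt n$ rather than the paper's purely asymptotic statement; the paper's route, on the other hand, ties the argument back to the conjugate-partition viewpoint already used elsewhere in the paper. The parity-correction step is handled essentially the same way in both proofs (move one element between blocks, or shift block sizes by one in the all-odd case), and your case analysis there is sound.
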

\begin{proof}
	For the purpose of deriving a contradiction assume there is a forcibly connected graphical degree sequence
	$\mathbf{\pi}=(d_1\ge d_2 \ge \cdots \ge d_n)$ of length $n$ with the largest term $d_1=M(n)=o(\sqrt{n})$.
	
	Let us first consider the case that $n$ is even. Let $\mathbf{\pi_H}$ be the higher half
	(of length $n/2$) of $\mathbf{\pi}$ and $\mathbf{\pi_L}$ be the lower half
	(of length $n/2$) of $\mathbf{\pi}$. If both $\mathbf{\pi_H}$ and $\mathbf{\pi_L}$ have even sums, then they can shown to be
	both graphical degree sequences based on the Nash-Williams condition \cite{Ruch1979,Rousseau1995,Sierksma1991} as follows.
	Suppose the Durfee square size of $\mathbf{\pi}$ is $s$ where $s\le d_1=M(n)$ by the definition of Durfee square. Since
	$\mathbf{\pi}$ is graphical it satisfies the Nash-Williams condition, which can be represented as $s$ inequalities:
	\[ \sum_{i=1}^{j}(d'_{i}-d_i)\ge j, \mbox{  }j=1,\cdots, s,  \]
	where $d'_1,\cdots,d'_s$ are the largest $s$ parts of the conjugate partition of the partition $\mathbf{\pi}$ ($d'_1=n$).
	Now $\mathbf{\pi}$
	and $\mathbf{\pi_H}$ have the same Durfee square size by our assumption that $s=o(\sqrt{n})$ and they have the same $s$ largest parts.
	Let the $s$ largest parts of the conjugate of $\mathbf{\pi_H}$ be $d''_1,\cdots,d''_s$ with $d''_1=n/2$ by our construction.
	To show that $\mathbf{\pi_H}$ is graphical, we only need to show that the following $s$ inequalities hold:
	\begin{equation} \label{eqn:Nash-WilliamsInequalities}
	\sum_{i=1}^{j}(d''_{i}-d_i)\ge j, \mbox{  }j=1,\cdots, s.
	\end{equation}
	The first of these inequalities $d''_1-d_1=n/2-M(n)\ge 1$ is clearly satisfied since $M(n)=o(\sqrt{n})$.
	We also have the following inequalities,
	\[ d''_j\ge s \mbox{ and } d_j\le M(n), \mbox{  } j=2,\cdots,s, \]
	so we have $d''_j-d_j\ge s-M(n), j=2,\cdots,s$. Even if $d''_j-d_j, j=2,\cdots,s$ are all negative, their sum will be of order
	$o(n)$ since $s\le M(n)=o(\sqrt{n})$. Clearly the $s$ inequalities in (\ref{eqn:Nash-WilliamsInequalities}) are all satisfied since
	$d''_1-d_1=n/2-M(n)$ is of order $\Omega(n)$. This shows that $\mathbf{\pi_H}$ is graphical. By the same argument $\mathbf{\pi_L}$
	is graphical and we have found that $\mathbf{\pi}$ can be decomposed into two sub graphical degree sequences $\mathbf{\pi_H}$
	and $\mathbf{\pi_L}$. This contradicts our assumption that $\mathbf{\pi}$ is forcibly connected.
	
	If both $\mathbf{\pi_H}$ and $\mathbf{\pi_L}$ have odd sums (we cannot have one of them having even sum and the other having
	odd sum since the sum of all the terms of $\mathbf{\pi}$ is even), then
	it must be the case that both $\mathbf{\pi_H}$ and $\mathbf{\pi_L}$ have an odd number of odd terms. Construct two new sequences
	$\mathbf{\pi'_H}$ and $\mathbf{\pi'_L}$
	from $\mathbf{\pi_H}$ and $\mathbf{\pi_L}$ by removing the largest odd term from $\mathbf{\pi_L}$ and adding it to $\mathbf{\pi_H}$.
	Now clearly $\mathbf{\pi'_H}$ and $\mathbf{\pi'_L}$ is a decomposition of $\mathbf{\pi}$ into two sub sequences of length $n/2+1$
	and $n/2-1$ respectively and both having even sums. Again they are guaranteed to be graphical degree sequences by the Nash-Williams condition
	using a similar argument as above, which contradicts the assumption that $\mathbf{\pi}$ is forcibly connected.
	
	The case for $n$ odd can be proved in a similar way. The conclusion that $M(n)$ cannot be of lower order
	than $\sqrt{n}$ then follows.
\end{proof}

We do not have any theory or algorithm to efficiently obtain $M(n)$ for any given $n$. Other than recording the minimum largest
term while enumerating all forcibly connected graphical degree sequences of length $n$,
a naive approach would be to let $\Delta$ start from 3 upward and test if there is a forcibly connected graphical degree sequence
of length $n$ with largest term $\Delta$ and stop incrementing $\Delta$ when we have found one.
Obviously this works but is not efficient. Any efficient algorithm for $M(n)$ might be worthwhile
to be added into Algorithm \ref{alg:fc} so that it can immediately return \textit{False} if $d_1<M(n)$.
However, while we conduct performance evaluations of Algorithm \ref{alg:fc} we do find that a random
graphical degree sequence of length $n$ with $d_1\le n/2$ most likely can be decided instantly by Algorithm
\ref{alg:fc}. Therefore we believe that an efficient algorithm for $M(n)$ will not help much on average.
We show the values of $M(n)$ based on our enumerative results in Table \ref{tab:Mn}. The fact that $M(15)=6$
agrees with the results of Table \ref{tab:enumDflargestpart(15)} where the counts $L_{15}[j]=0$ for all $j<6$.
As a side note, the minimum largest term in any potentially connected graphical sequence of length $n$ is clearly
2 since the degree sequence $2,2,\cdots,2$ ($n$ copies) is potentially connected while $1,1,\cdots,1$ ($n$ copies)
is not potentially connected.

\begin{table}[!htb]
	\centering
	\caption{Minimum largest term $M(n)$ of forcibly connected graphical sequences of length $n$.}
	\begin{tabular}[htbp]{|c||c|c|c|c|c|c|c|c|c|c|c|c|}
		\hline
		$n$ & 3 & 4 & 5 & 6 & 7 & 8 & 9 & 10 & 11 & 12 & 13 & 14 \\
		\hline
		$M(n)$ & 2 & 2 & 2 & 3 & 3 & 3 & 4 & 4 & 5 & 5 & 5 & 6 \\ \hline \hline
		$n$ & 15 & 16 & 17 & 18 & 19 & 20 & 21 & 22 & 23 & 24 & 25 & 26 \\
		\hline
		$M(n)$ & 6 & 6 & 7 & 7 & 7 & 7 & 8 & 8 & 8 & 8 & 8 & 9 \\ \hline
	\end{tabular}
	\label{tab:Mn}
\end{table}


We also investigated the number $g_f(n)$ of forcibly connected graphical partitions of a given even integer $n$. There is
a highly efficient Constant Amortized Time (CAT) algorithm of Barnes and Savage \cite{BarnesSavage1997}
to generate all graphical partitions of a given even $n$. And there are efficient counting algorithms of Barnes and
Savage \cite{BarnesSavage1995} and Kohnert \cite{Kohnert2004}
to count the number $g(n)$ of graphical partitions of even $n$ without generating them. It is known from Erd{\H{o}}s and Richmond
\cite{Erdos1993} that the number $g_c(n)$ of potentially connected graphical partitions of $n$ and $g(n)$ are of equivalent
order, i.e. $\lim_{n \to \infty} \frac{g_c(2n)}{g(2n)} = 1$. It is also known from Pittel \cite{Pittel1999}
that the proportion of graphical partitions among all partitions of an integer $n$ tends to 0. Although the order of the number
$p(n)$ of unrestricted partitions of $n$ is long known since Hardy and Ramanujan \cite{Hardy1918}, the exact asymptotic order
of $g(n)$ is still unknown. We know of no algorithm to count the number $g_f(n)$ of forcibly connected graphical
partitions of $n$ without generating them. Using a strategy similar to that we employed in computing $D_f(n)$,
we adapted the algorithm of Barnes and
Savage \cite{BarnesSavage1997} and incorporated the test of forcibly connectedness from Algorithm \ref{alg:fc}
and then count those that are forcibly connected. The growth of $g(n)$ is quick and we only have numerical results
of $g_f(n)$ for $n$ up to 170. The results together with the proportion
of them in all graphical partitions are listed in Table \ref{tab:enumgf(n)}. For the purpose of saving space we only
show the results in increments of 10 for $n$. From the table it seems
reasonable to conclude that the proportion $g_f(n)/g(n)$ will decrease when $n$ is beyond some small threshold
and it might tend to the limit 0.

\begin{table}[!htb]
	\centering
	\caption{Number of forcibly connected graphical partitions of $n$ and their proportions in all graphical partitions of $n$}
	\begin{tabular}[htbp]{|c||c|c|c|}
		\hline
		$n$ & $g(n)$ & $g_f(n)$ & $g_f(n)/g(n)$ \\
		\hline
		\hline
		10 & 17 & 8 & 0.470588 \\ \hline
		20 & 244 & 81 & 0.331967 \\ \hline
		30 & 2136 & 586 & 0.274345 \\ \hline
		40 & 14048 & 3308 & 0.235478 \\ \hline
		50 & 76104 & 15748 & 0.206927 \\ \hline
		60 & 357635 & 66843 & 0.186903 \\ \hline
		70 & 1503172 & 256347 & 0.170537 \\ \hline
		80 & 5777292 & 909945 & 0.157504 \\ \hline
		90 & 20614755 & 3026907 & 0.146832 \\ \hline
		100 & 69065657 & 9512939 & 0.137738 \\ \hline
		110 & 219186741 & 28504221 & 0.130045 \\ \hline
		120 & 663394137 & 81823499 & 0.123341 \\ \hline
		130 & 1925513465 & 226224550 & 0.117488 \\ \hline
		140 & 5383833857 & 604601758 & 0.112299 \\ \hline
		150 & 14555902348 & 1567370784 & 0.107679 \\ \hline
		160 & 38173235010 & 3951974440 & 0.103527 \\ \hline
		170 & 97368672089 & 9714690421 & 0.099772 \\ \hline
	\end{tabular}
	\label{tab:enumgf(n)}
\end{table}

\begin{table}[!htb]
	\centering
	\caption{Number of potentially (row $c_{20}[j]$) and forcibly (row $f_{20}[j]$) connected graphical partitions of 20 with given number of parts $j$.}
	\begin{tabular}[htbp]{|c||c|c|c|c|c|c|c|}
		\hline
		number of parts $j$ & 5 & 6 & 7 & 8 & 9 & 10 & 11 \\ \hline
		\hline
		$c_{20}[j]$ & 1 & 9 & 26 & 38 & 37 & 36 & 30 \\
		\hline
		$f_{20}[j]$ & 1 & 9 & 25 & 22 & 10 & 9 & 5 \\ \hline
	\end{tabular}
	\label{tab:enumgfparts(20)}
\end{table}

\begin{table}[!htb]
	\centering
	\caption{Number $l_{20}[j]$ of forcibly connected graphical partitions of 20 with given largest term $j$.}
	\begin{tabular}[htbp]{|c||c|c|c|c|c|c|c|c|}
		\hline
		largest part $j$ & 3 & 4 & 5 & 6 & 7 & 8 & 9 & 10 \\ \hline
		\hline
		$l_{20}[j]$ & 1 & 14 & 26 & 20 & 12 & 5 & 2 & 1 \\ \hline
	\end{tabular}
	\label{tab:enumgflargestpart(20)}
\end{table}

\begin{table}[!htb]
	\centering
	\caption{Minimum largest term $m(n)$ of forcibly connected graphical partitions of $n$.}
	\begin{tabular}[htbp]{|c||c|c|c|c|c|c|c|c|c|c|}
		\hline
		$n$ & 10 & 20 & 30 & 40 & 50 & 60 & 70 & 80 & 90 & 100 \\ \hline
		\hline
		$m(n)$ & 2 & 3 & 4 & 5 & 5 & 6 & 6 & 6 & 7 & 7  \\ \hline \hline
	\end{tabular}
	\label{tab:mn}
\end{table}

Like the situation for $D_f(n)$ the adapted algorithm from Barnes and Savage \cite{BarnesSavage1997}
to compute $g_f(n)$ actually generates all forcibly connected
graphical partitions of $n$ so it is trivial to also output the individual counts based on the number of parts or the largest part.
In Table \ref{tab:enumgfparts(20)} we show the individual counts of potentially and forcibly connected graphical partitions
of 20 based on the number of parts. Counts for the number of parts less than 5 or greater than 11 are not shown since those
counts are all 0. The ranges of the number of parts $j$ for which the number $c_n[j]$ of potentially connected graphical
partitions of $n$ with $j$ parts and the number $f_n[j]$ of forcibly connected graphical partitions of $n$
with $j$ parts are nonzero are exactly the same
based on Proposition \ref{thm:prop_potentialForciblyAgree}. The smallest number of parts $j$ for which $c_n[j]$ and
$f_n[j]$ are both nonzero is the smallest positive integer $t(n)$ such that $t(n)(t(n)-1)\ge n$ and this is also the smallest
number of parts for which a partition of $n$ with this many parts might be graphical. The largest number of parts $j$
for which $c_n[j]$ and $f_n[j]$ are both nonzero is $n/2+1$ based on the Wang and Cleitman characterization \cite{WangKleitman1973}.
In Table \ref{tab:enumgflargestpart(20)} we show the individual counts of forcibly connected graphical partitions of 20
based on the largest part. Counts for the largest part less than 3 or greater than 10 are not shown since those
counts are all 0. Clearly $n/2$ is the maximum largest part of any forcibly connected graphical partition of $n$
since $n/2,1,1,\cdots,1$ ($n/2$ copies of 1) is a forcibly connected graphical partition of $n$ and no graphical partition of $n$ has
its largest part greater than $n/2$. However, similar to the case of $M(n)$, the minimum largest part, $m(n)$,
of any forcibly connected graphical partition of $n$ does not seem to be easily obtainable. Clearly $m(n)$ grows
at most like $\sqrt{n}$ since for every large even $n$ it has a graphical partition with about $\sqrt{n}$ parts and all parts
about $\sqrt{n}-1$ and this graphical partition is forcibly connected. In Table \ref{tab:mn} we show several values of $m(n)$. They
are obtained while we exhaustively generate all graphical partitions of $n$ and keep a record of the minimum largest
part. The fact that $m(20)=3$ agrees with the results of Table \ref{tab:enumgflargestpart(20)} where $l_{20}[j]=0$
for all $j<3$. As a side note, the minimum
largest part of any potentially connected graphical partition of $n$ is clearly 2 since $2,2,\cdots,2$ ($n/2$ copies)
is a potentially connected graphical partition of $n$ while $1,1,\cdots,1$ ($n$ copies) is not.

\subsection{Questions and conjectures}
Based on the available enumerative results we ask the following questions and make certain conjectures:

1. What is the growth order of $D_f(n)$ relative to $D(n)$? Note that the exact asymptotic order of $D(n)$ is unknown yet.
(Some upper and lower bounds of $D(n)$ are known. See Burns \cite{Burns2007}).
We conjecture $\lim_{n \to \infty} \frac{D_f(n)}{D(n)} = 1$.
That is, almost all zero-free graphical degree sequences of length $n$ are forcibly connected. If this is true, then it is a stronger
result than the known result (see \cite{Wang2016}) $\lim_{n \to \infty} \frac{D_c(n)}{D(n)} = 1$ since $D_f(n)\le D_c(n)\le D(n)$.
Furthermore, we conjecture that $D_f(n)/D(n)$ is monotonously increasing when $n\ge 8$. Let $D_{c\_k}(n)$ and $D_{f\_k}(n)$
denote the number of potentially and forcibly $k$-connected graphical degree sequences of length $n$ respectively.
It is already known from \cite{Wang2016} that $\lim_{n \to \infty} \frac{D_{c\_k}(n)}{D(n)} \neq 1$ when $k\geq 2$. Clearly we
also have $\lim_{n \to \infty} \frac{D_{f\_k}(n)}{D(n)} \neq 1$ when $k\geq 2$.
What can be said about the relative orders of $D_{c\_k}(n)$, $D_{f\_k}(n)$ and $D(n)$ when $k\geq 2$?

2. What is the growth order of $g_f(2n)$ relative to $g(2n)$? Note that the exact asymptotic order of $g(2n)$ is unknown yet.
We conjecture $\lim_{n \to \infty} \frac{g_f(2n)}{g(2n)} = 0$. That is, almost none of the graphical partitions of $2n$
are forcibly connected. Furthermore, we conjecture that $g_f(2n)/g(2n)$ is monotonously decreasing when $n\ge 5$.
Let $g_{c\_k}(n)$ and $g_{f\_k}(n)$
denote the number of potentially and forcibly $k$-connected graphical partitions of $n$ respectively.
What can be said about the relative orders of $g_{c\_k}(n)$, $g_{f\_k}(n)$ and $g(n)$ when $k\geq 2$?

3. We conjecture that the numbers of forcibly connected graphical partitions
of $N$ with exactly $n$ parts, when $N$ runs through $2n-2,2n,\cdots,n(n-1)$, give a unimodal sequence.

4. Let $t(n)$ be the smallest positive integer such that $t(n)(t(n)-1)\ge n$.
We conjecture that the numbers of forcibly connected graphical partitions
of $n$ with $j$ parts, when $j$ runs through $t(n), t(n)+1, \cdots, n/2+1$, give a unimodal sequence.

5. What is the growth order of $M(n)$, the minimum largest term in any forcibly connected graphical sequence of
length $n$? Is there a constant $C>0$ such that $\lim_{n \to \infty} \frac{M(n)}{n} = C$?
Is there an efficient algorithm to compute $M(n)$?

6. What is the growth order of $m(n)$, the minimum largest term in any forcibly connected graphical partition of
$n$? Is there a constant $C>0$ such that $\lim_{n \to \infty} \frac{m(n)}{\sqrt{n}} = C$?
Is there an efficient algorithm to compute $m(n)$?

7. We conjecture that the numbers of forcibly connected graphical partitions of an even $n$
with the largest part exactly $\Delta$, when $\Delta$ runs through $m(n),m(n)+1,\cdots,n/2$, give a unimodal sequence.

8. We showed all these decision problems to test whether a given graphical degree sequence is forcibly $k$-connected
to be in co-NP for fixed $k\ge 1$. Are they co-NP-hard? Is the decision
problem for $k+1$ inherently harder than for $k$?

\section{Conclusions}
In this paper we presented an efficient algorithm to test whether a given graphical degree sequence is forcibly
connected or not and its extensions to test forcibly $k$-connectedness of graphical degree sequences for fixed $k\ge 2$.
Through performance evaluations on a wide range of long random graphical degree sequences we
demonstrate its average case efficiency and we believe that it runs in polynomial time on average.
We then incorporated this testing algorithm into existing algorithms that enumerate zero-free graphical
degree sequences of length $n$ and graphical partitions of an even integer $n$ to obtain some enumerative results
about the number of forcibly connected graphical degree sequences of length $n$ and forcibly connected graphical
partitions of $n$. We proved some simple observations related to the available numerical results and
made several conjectures. We are excited that there are a lot for further research in this direction.

\bibliographystyle{plain}
\bibliography{testFC}

\end{document}